\newtheorem{theorem}{Theorem}[section]
\theoremstyle{definition}
\newtheorem{definition}[theorem]{Definition}
\newtheorem{corollary}[theorem]{Corollary}
\newtheorem{fact}[theorem]{Fact}
\newtheorem{question}[theorem]{Question}
\theoremstyle{remark}
\newtheorem{remark}[theorem]{Remark}
\numberwithin{equation}{section}
\begin{document}

\title{Reservoir Computing Dynamics for Single Nonlinear Node with Delay Line Structure}
\author{Claudio A. DiMarco}
\date{\today}

\maketitle

\begin{abstract}
For a reservoir computer composed of a single nonlinear node and delay line, we show that after a finite period of discrete time, the distance between two reservoir outputs is bounded above by a constant multiple of the distance between their respective inputs.  We also translate familiar separation properties from the context of Echo State Networks to that of the single nonlinear node structure.
\end{abstract}

\section{Introduction}
Many recent studies have been done on the construction, behavior, and performance of reservoir computers.  Two popular reservoir structures are the echo state network (ESN) and the single nonlinear node with delay line~\cite{appeltantetal}.  Jaeger provided a mathematical description and analysis of echo states, in which case the reservoir is assumed to have a finite number of randomly interconnected internal nodes~\cite{jaeger}.  While much work has been done to construct, test, and analyze single-node reservoir schemes~\cites{appeltant,appeltantetal,paquot,duport,brunner}, there remains a lack of understanding and an absence of a rigorous mathematical analysis of the underlying dynamics of such a system.

We seek to provide some mathematical insight into the dynamics of the single-node reservoir computer.  This analysis is done in two parts.  Section \ref{results1} describes how the variation in outputs (in the sense of the $\ell^2$ norm) is bounded above by a constant multiple of the variation of time-series input vectors.  To this end, we draw on basic mathematical tools to prove something of a Lipschitz continuity condition on the entire system:
\newtheorem*{main_result}{Theorem \ref{main_result}}
\begin{main_result}
Suppose $f:\mathbb{R}\rightarrow \mathbb{R}$ is $L$-Lipschitz with $\alpha L < \nicefrac{1}{\sqrt{2}},$ and define $y_u:\mathbb{Z}_{\geq 0} \rightarrow \mathbb{R}$ by $y_u(t)=\sum_{k=0}^{N} w_k x_k^{(u)}(t)$ for an input spike train $u\in\mathbb{R}^M.$  Then
\begin{center}
$\| y_u-y_v \|_{\ell^2(\mathbb{R}^M)}^2 \leq |w|^2 M(L\beta)^2 (N+1) \left(1+\frac{2}{1-2\alpha^2 L^2} \right) \| u-v \|_{\ell^2(\mathbb{R}^M)}^2$
\end{center}
for all inputs $u,v.$
\end{main_result}

Section \ref{classification_section} provides a translation of some well known classification metrics~\cites{goodman,gibbons} to the context of the single-node with delay line reservoir structure.  That section also contains a brief review and analysis of the importance of selecting an injective nonlinear sigmoid function for the single  nonlinear input node.  In particular, we show that classification mishaps are possible if the injectivity condition is disregarded, or if the input data are not properly pre-conditioned.

\section{Preliminaries}\label{preliminaries}

\subsection{Notation and definitions}\label{notation_subsection}

Let $t\in \mathbb{Z}_{\geq 0}$ denote discrete time and let $\alpha, \beta \in (0,1)$.  An \textit{input} is a vector $u \in \mathbb{R}^m$ where the $0$th entry is defined as $u(0)=0.$  Since we only consider reservoirs with a single nonlinear node and delay line, the term \textit{reservoir} will be reserved for that structure.
\begin{definition}\label{resdef}
Let $f:\mathbb{R}\rightarrow\mathbb{R}$ be non-linear.  A \textit{reservoir of length} $N$, \textit{nonlinearity} $f$, \textit{input gain} $\beta,$ \textit{and feedback gain} $\alpha,$ denoted $X(N,f,\beta,\alpha),$ is a set of $N$ functions $X=\{x_1,x_2,\dots,x_N\},$ where $ x_k:\mathbb{Z}_{\geq 0} \rightarrow \mathbb{R},$ defined by the relations \begin{equation}\label{resdefeqn}
\begin{split}
x_k(0) &= 0~\text{for all}~k\geq 0 \\
x_1(t) &= f(\alpha x_N(t-1) + \beta u(t))~\text{for}~t\geq 1\\
x_{k+1}(t) &= x_k(t-1)~\text{for all}~k~\text{and for}~t\geq 1
\end{split}
\end{equation}
We will have occasion to use a modified version of Definition \ref{resdef} in which $x_1(t)= f(\alpha x_N(t) + \beta u(t))$ instead of $x_1(t)= f(\alpha x_N(t-1) + \beta u(t))$ in \eqref{resdefeqn}.
\end{definition}
Call $x(t)=\{x_1(t),x_2(t),\dots,x_N(t)\}$ the \textit{state of the reservoir at time} $t.$  A vector/set of \textit{weights} is $(w_1,w_2,\dots,w_n)\in \mathbb{R}^N,$ where $w_k$ is the weight of the ``node" $x_k.$  Given a set of weights, the \textit{output of the reservoir at time $t$} is defined as the sum of weighted node states $y(t) = \sum_{k=1}^N w_k x_k(t).$  Suppose we are working with a given data set of (masked) inputs, call it $\mathcal{\tilde{U}}$.  Put $M=\max\{m:u\in\mathbb{R}^m\}$. The respective reservoir outputs will be considered over the interval of discrete time $[1,M].$  In order to compare two inputs $u$ and $v$ it is convenient to simply extend all input vectors of dimension less than $M$.  To this end, if $u\in\mathbb{R}^m$ and if $m<M,$ identify $u$ with $(u(1),u(2),\dots,u(m),0,\dots,0)\in\mathbb{R}^M.$  From this point forward we will write $\mathcal{U}$ as the set of all inputs after this elongation process is performed.  Writing $y_u(t)$ for the reservoir output at time $t$ with respect to an input $u,$ a discrete time interval $[1,M]$ yields an output vector $y_u \in \mathbb{R}^{M}.$  So it is reasonable to consider the $\ell^2$ norm of $u$ or $y_u,$ denoted $\|u\|_{\ell^2(\mathbb{R}^M)}$, or just $\|u\|_2$ where the dimension of $u$ is clear from the context.

\begin{center}
	\begin{figure}[h]
		\includegraphics[width=0.99\textwidth]{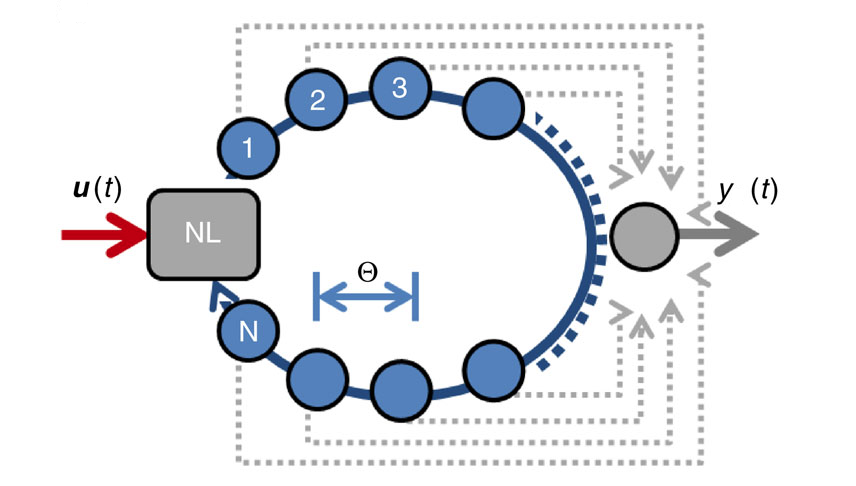}\caption{(Brunner et al., 2013)}\label{caption}
	\end{figure}
\end{center}

\begin{center}
    \begin{tabular}{ | l | l | l | p{5cm} |}
    \hline
    Notation & Meaning \\ \hline \hline
    $u(t)$ & coordinate $t$ of input $u$ \\ \hline
    $x_n^{(u)}(t)$ & state of $n$th node at time $t$ with input $u$ \\ \hline
    $\alpha$ & feedback gain \\ \hline
		$\beta$ & input gain \\ \hline
		$w_i$ & weight of $i$th node state \\ \hline
    \end{tabular}
\end{center}

\subsection{Weight training}
When preparing a reservoir to perform a task, it is necessary to choose a set of weights.  We assume the existence of a data set consisting of a collection of inputs.  One way or another, these data are used to determine (train) appropriate weights.  To train a set of weights, one needs a set of inputs and a corresponding set of target outputs $\{\hat{y}(t)\}$ to be compared to the actual machine outputs.  That is, one finds $\{w_k\}$ such that the variation between $y(t)=\sum_{k=1}^N w_k x_k(t)$ and $\hat{y}(t)$ is minimized.

There is a variety of methods to train weights that is used in the literature including least squares, ridge regression, minimization of mean square error, and minimization of normalized mean square error~\cites{appeltant,paquot,duport}.  Ridge regression is commonly used to accomodate systems with several parameters to avoid overfitting weights to training data.  The goal is to ensure that the system does not categorize two distinct members of the same input class as ``different."    One of the most popular benchmark tasks for testing a reservoir computer is NARMA, wherein the Normalized Root Mean Square Error (NRMSE) is minimized~\cite{appeltant}.

The authors of \cite{prater} developed an algorithm for computing what are called Dantzig selectors, which were defined by Candes and Tao in \cite{candes}.  Let $\delta >0,~ X\in M_{n\times p}(\mathbb{R}),$ and $y\in \mathbb{R}^n.$  Let $D$ be a diagonal matrix where $D(k,k)$ is the $\ell^2$ norm of the $k$th column of $X.$ The Dantzig selector, denoted $\hat{\beta}\in\mathbb{R}^p$, solves the following optimization problem:
\begin{equation*}
\hat{\beta}\in \text{argmin}\{\|\beta\|_1 : D^{-1} X^T(X\beta - y)\|_{\infty} \leq \delta\}.
\end{equation*}
This concept can be used to train reservoir node weights by storing the states of the reservoir over time as the columns of $X$; that is, $X(:,t)=X(t)$.


\section{An upper bound on input $\mapsto$ output error}\label{results1}

To ensure that an input $\mapsto$ output system is reasonably accurate, it is desirable to have a growth condition that places an upper bound on the distortion of distances between two outputs with respect to the distance between their corresponding inputs.  The following is a classical definition that describes such a condition.

\begin{definition}\label{Lipschitz_definition}
Suppose $X$ and $Y$ are metric spaces and $L\geq 0.$  A mapping $f:X\rightarrow Y$ is \textit{$L$-Lipschitz} if
\begin{equation*}
d_Y(f(z),f(w)) \leq L d_X(z,w)
\end{equation*}
for all $z,w\in X.$  The function $f$ is \textit{Lipschitz} if it is $L$-Lipschitz for some $L\geq 0.$
\end{definition}
\noindent If $f:\mathbb{R}\rightarrow\mathbb{R}$ is differentiable, then $f'$ is bounded if and only if $f$ is Lipschitz.  In fact, if $f:\mathbb{R}\rightarrow \mathbb{R}$ is any Lipschitz function, then $f$ is absolutely continuous, and is differentiable except on a set $E\subset \mathbb{R}$ of Lebesgue measure $0.$  Definition \ref{Lipschitz_definition} says that $f$ can only increase distances by a bounded amount.

In this section we assume that there is a specific task at hand, and that target outputs have been defined and used to train a set of weights.  It is also assumed that for the set of inputs $\mathcal{U}$, each $u\in\mathcal{U}$ has been standardized to have dimension $M$ as described in Subsection \ref{notation_subsection}.  In other words, we only consider inputs over the discrete time interval $[1,M]\cap \mathbb{N}$.

For the purpose of classification in a reservoir system, it is desirable that the distance between two outputs is controlled by the distance between their respective inputs.  The Lipschitz condition clearly describes this property.  The main result in this section is that the function $u\mapsto y_u$ is Lipschitz, where the Lipschitz constant depends on $N,\alpha, \beta,$ and $M.$  This result does not appear to be optimal, however, since the Lipschitz constant is directly proportional to the dimension of the input vectors and the number of nodes in the reservoir.  Recall Definition \ref{resdef}:

\begin{definition}\label{1} For a system with $N+1$ nodes and input $u,$
\begin{equation}
\begin{split}
x_0(t) &= f( \alpha x_N(t-1) + \beta u(t)) \\
x_{n+1}(t) &= x_n(t-1),~~n=0,\dots,N-1 \\
x_n(0) &= 0\\
x_0(1) &= f(0)\\
\end{split}
\end{equation}
\end{definition}

 The following couple of modest observations will be useful for proving Theorem \ref{main_result}.

\begin{fact}\label{2}
If $0\leq t \leq N$ then $x_N(t)=0$ and $x_N(N+t)=f(\beta u(t))$.
\end{fact}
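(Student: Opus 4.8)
The plan is to prove both assertions by unwinding the delay-line recurrence $x_{n+1}(t) = x_n(t-1)$ and then invoking the initial conditions $x_n(0)=0$. First I would record the basic shift identity obtained by iterating the delay relation $j$ times, namely $x_N(t) = x_{N-j}(t-j)$. This is valid as long as every intermediate time index stays at least $1$ (so that the recurrence applies) and every node index stays in $\{0,\dots,N\}$; concretely it holds for $0 \le j \le \min(N,t)$, and in particular $x_N(s) = x_0(s-N)$ whenever $s \ge N$.

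For the first claim, suppose $0 \le t \le N$. Taking $j = t$ in the shift identity collapses the entire chain down to the initial layer, $x_N(t) = x_{N-t}(0)$. Since $N - t \ge 0$, the initial condition $x_n(0) = 0$ immediately gives $x_N(t) = 0$. This is just the statement that it takes $N$ time steps for the first input to propagate down the delay line and reach the last node.

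For the second claim, fix $0 \le t \le N$ and apply the shift identity with $j = N$ at the time $s = N + t \ge N$, obtaining $x_N(N+t) = x_0(t)$. For $t \ge 1$ I would then substitute the node-$0$ update rule $x_0(t) = f(\alpha x_N(t-1) + \beta u(t))$. The feedback argument $x_N(t-1)$ is controlled by the first claim: since $0 \le t-1 \le N-1 \le N$, that claim forces $x_N(t-1) = 0$, so the update collapses to $x_0(t) = f(\beta u(t))$, as required.

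The only delicate points here are bookkeeping ones. At each unwinding step I must check that the time index is at least $1$ before applying the recurrence; in the second claim the smallest index reached is $t+1 \ge 1$, so this causes no trouble, and the node index never leaves $\{0,\dots,N\}$. The genuinely load-bearing step is the cancellation of the feedback term: it is exactly the first claim---that the last node has not yet ``seen'' any input during the first $N$ steps---that removes the $\alpha x_N(t-1)$ contribution and makes the reservoir output a clean function of the single input coordinate $u(t)$. The boundary case $t=0$ is consistent with the convention $u(0)=0$ (and $f(0)=0$ for the sigmoidal nonlinearities in use), under which both formulas agree at $x_N(N)$.
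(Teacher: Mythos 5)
Your proof is correct. The paper itself offers no proof of Fact \ref{2} --- it is presented as one of two ``modest observations'' --- but your argument is exactly the one the paper implicitly relies on: the shift identity you derive by iterating $x_{n+1}(t)=x_n(t-1)$ is precisely what the paper invokes in its proof of Corollary \ref{3}, where $x_N(t-1)$ is rewritten as $x_{N-t+1}(0)$, and your use of the first claim to annihilate the feedback term $\alpha x_N(t-1)$ inside $x_0(t)$ is the load-bearing step there as well. One substantive point in your favor: the boundary case $t=0$ that you flag is a real (if minor) defect of the statement rather than of your proof --- unwinding the delay line gives $x_N(N)=x_0(0)=0$, whereas the Fact asserts $x_N(N)=f(\beta u(0))=f(0)$, and these agree only if $f(0)=0$, an assumption the paper never makes explicitly (indeed Definition \ref{1} records $x_0(1)=f(0)$ as though it could be nonzero, though it does hold for the nonlinearities $\tanh$ and $\sin$ discussed in Section \ref{classification_section}). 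So either the second identity should be read as restricted to $1\le t\le N$, or $f(0)=0$ should be added as a standing hypothesis; you were right to surface this rather than sweep it under the rug.
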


\begin{corollary}\label{3}
If $t\in [1,N]\cap \mathbb{Z}$ then $x_0(t) = f(\beta u(t))$.
\end{corollary}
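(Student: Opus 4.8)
The plan is to read the identity straight off the defining recursion of Definition~\ref{1} after using Fact~\ref{2} to annihilate the feedback contribution; no induction or estimate is needed. Fix $t\in[1,N]\cap\mathbb{Z}$. The first relation in Definition~\ref{1} reads $x_0(t)=f\bigl(\alpha x_N(t-1)+\beta u(t)\bigr)$, so the entire content of the corollary is the assertion that the argument $\alpha x_N(t-1)+\beta u(t)$ of $f$ collapses to $\beta u(t)$, i.e.\ that $x_N(t-1)=0$.

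To get that, the key observation is purely about indices: the hypothesis $1\le t\le N$ forces the shifted index to satisfy $0\le t-1\le N-1$, hence $t-1\in[0,N]\cap\mathbb{Z}$. Applying the first half of Fact~\ref{2} with $t-1$ in place of $t$ then gives $x_N(t-1)=0$, and substituting into the recursion yields $x_0(t)=f\bigl(\alpha\cdot 0+\beta u(t)\bigr)=f(\beta u(t))$, as claimed. Since this reasoning is uniform in $t$ over the whole range $[1,N]$, a single line suffices for all such $t$ simultaneously.

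The only thing that warrants a remark is the left endpoint $t=1$: here Definition~\ref{1} separately records the base value $x_0(1)=f(0)$, which must be reconciled with the formula $x_0(1)=f(\beta u(1))$ coming out of the recursion (using $x_N(0)=0$). This is consistent under the standing extension/masking convention on the spike train, which makes the leading coordinate inert; equivalently one may just take that base value as the $t=1$ instance, and the Fact~\ref{2} argument above still applies verbatim. I do not expect any genuine obstacle — the statement is an immediate corollary of Fact~\ref{2}, and the only point requiring care is the index shift $t\mapsto t-1$ and verifying it lands inside the range $[0,N]$ on which Fact~\ref{2} guarantees $x_N$ vanishes.
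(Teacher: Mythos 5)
Your proof is correct and follows essentially the same route as the paper: both apply Fact~\ref{2} (the paper spells out the intermediate step $x_N(t-1)=x_{N-t+1}(0)=0$ via the delay-line relation) to kill the feedback term in $x_0(t)=f(\alpha x_N(t-1)+\beta u(t))$. Your additional remark reconciling the recursion with the base value $x_0(1)=f(0)$ at $t=1$ is a reasonable extra care the paper does not bother with.
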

\begin{proof}
Since $t-1\leq N,$ Definition \ref{1} and Fact \ref{2} yield
\begin{equation*}
\begin{split}
x_0(t) &= f(\alpha x_N(t-1) + \beta u(t))\\
&= f(\alpha x_{N-t+1}(0) + \beta u(t))\\
&= f(\beta u(t)). \qedhere
\end{split}
\end{equation*}
\end{proof}

It would be convenient to have a Lipschitz continuity condition on the function $u(t)\mapsto y_u(t)$ where $t$ is fixed.  This idea seems naive, however, since we do not expect the difference of reservoir outputs $|y_u(t)-y_v(t)|$ to depend entirely on the difference in one entry of the inputs $|u(t)-v(t)|.$  That is, we do not know how to capture the behavior of the reservoir simply by looking at the input difference at a particular time step.  An approach to predicting classification accuracy involves the concept of ``separation" of inputs and reservoir states, which will be discussed in Section \ref{classification_section}.

\begin{theorem}\label{main_result}
Suppose $X(N,f,\beta,\alpha)$ is a reservoir and $f:\mathbb{R}\rightarrow \mathbb{R}$ is $L$-Lipschitz with $\alpha L < \nicefrac{1}{\sqrt{2}}.$  Then for two inputs $u$ and $v,$
\begin{center}
$\| y_u-y_v \|_{\ell^2(\mathbb{R}^M)}^2 \leq |w|^2 M(L\beta)^2 (N+1) \left(1+\frac{2}{1-2\alpha^2 L^2} \right) \| u-v \|_{\ell^2(\mathbb{R}^M)}^2.$
\end{center}
\end{theorem}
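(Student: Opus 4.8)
The plan is to collapse the whole reservoir down to the scalar recursion governing a single reference node and then reassemble the output at the end. Working in the setting of Definition \ref{1}, write $\Delta z(s) := x_0^{(u)}(s) - x_0^{(v)}(s)$ and $a_s := \abs{\Delta z(s)}^2$, with the convention $\Delta z(s) = 0$ for $s \le 0$. The shift relation $x_{n+1}(t) = x_n(t-1)$ gives $x_n^{(u)}(t) = x_0^{(u)}(t-n)$ whenever $t \ge n$ (and $0$ otherwise), so in particular $x_N(t-1) = x_0(t-N-1)$. Substituting this into $x_0(t) = f(\alpha x_N(t-1) + \beta u(t))$, applying the $L$-Lipschitz bound for $f$, and then using $(p+q)^2 \le 2p^2 + 2q^2$, one obtains for $t \ge N+1$ the delayed recursion
\[
a_t \;\le\; 2\alpha^2 L^2\, a_{t-N-1} \;+\; 2\beta^2 L^2\,\abs{u(t)-v(t)}^2,
\]
while Corollary \ref{3} supplies the milder bound $a_t \le \beta^2 L^2\,\abs{u(t)-v(t)}^2$ for $1 \le t \le N$, and $a_0 = 0$.

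The next step is to iterate this recursion. The chain of indices $t, t-(N+1), t-2(N+1), \dots$ strictly decreases in steps of $N+1$, so it terminates at some $s_0 \in \{0,1,\dots,N\}$, where we invoke either $a_{s_0}=0$ or the mild bound. The hypothesis $\alpha L < \nicefrac{1}{\sqrt 2}$ is precisely what forces the geometric ratio $\gamma := 2\alpha^2 L^2$ to satisfy $\gamma < 1$, so summing the resulting geometric series and replacing every factor $\abs{u(\cdot)-v(\cdot)}^2$ by $\|u-v\|_2^2$ yields the uniform, $t$-independent estimate
\[
a_s \;\le\; \beta^2 L^2 \Big(1 + \frac{2}{1-2\alpha^2 L^2}\Big)\,\|u-v\|_2^2 \qquad\text{for every } s \ge 1.
\]

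Finally I would reassemble the output. Since $x_n^{(u)}(t) = x_0^{(u)}(t-n)$, the output difference telescopes to $y_u(t)-y_v(t) = \sum_{n=0}^{N} w_n\,\Delta z(t-n)$, a sum with at most $N+1$ nonzero terms, so Cauchy--Schwarz gives $\abs{y_u(t)-y_v(t)}^2 \le \abs{w}^2 \sum_{n=0}^{N} a_{t-n} \le \abs{w}^2 (N+1)\,\beta^2 L^2\big(1 + \tfrac{2}{1-2\alpha^2 L^2}\big)\|u-v\|_2^2$, using the uniform bound on each $a_{t-n}$. Summing this over the $M$ time steps $t = 1,\dots,M$ introduces the factor $M$ and produces exactly the claimed inequality.

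The only genuine obstacle is bookkeeping: verifying that the iteration of the delayed recursion terminates cleanly, that the two terminal cases ($s_0 = 0$ versus $s_0 \in [1,N]$, together with the slight tension between $x_0(1)=f(0)$ in Definition \ref{1} and Corollary \ref{3}) are absorbed without spoiling the constant, and that the output sum truly has only $N+1$ contributions once the shift identity is applied. The two deliberately wasteful moves — replacing each $\abs{u(t)-v(t)}^2$ by $\|u-v\|_2^2$ and then adding $M$ copies of a $t$-independent bound — are exactly what generate the non-optimal factors $M$ and $(N+1)$ already noted before the theorem, so nothing clever is needed there; the single inequality doing real work is $(p+q)^2 \le 2(p^2+q^2)$, which is also the origin of the $\nicefrac{1}{\sqrt 2}$ threshold.
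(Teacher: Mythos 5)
Your proof is correct, and it reaches the theorem's exact constant by a genuinely different and tidier route than the paper. The paper partitions time into blocks $I_j=[j(N+1),(j+1)(N+1)]$, writes each node state on $I_j$ explicitly as a $j$-fold nested composition $f(\alpha f(\cdots f(\beta u(\cdot))\cdots))$, and then attacks the fully unfolded expression with Lipschitz bounds, repeated convexity $(p+q)^2\le 2(p^2+q^2)$ (tracking a cascade of powers of $2$), and Cauchy--Schwarz, before recognizing the geometric series $\sum_k 2^k(\alpha L)^{2(k-1)}$ and summing over at most $M$ blocks. You instead compress all of that into the single delayed scalar recursion $a_t\le 2\alpha^2L^2\,a_{t-N-1}+2\beta^2L^2\abs{u(t)-v(t)}^2$ for the reference node and iterate it; the geometric series with ratio $2\alpha^2L^2<1$ appears once, cleanly, and the terminal cases you worried about are exactly the paper's Fact \ref{2} and Corollary \ref{3} (the chain bottoms out at $a_{s_0}$ with $s_0\in\{0,\dots,N\}$, where either $a_{s_0}=0$ or $a_{s_0}\le\beta^2L^2\abs{u(s_0)-v(s_0)}^2$, which is absorbed into the ``$1+$'' of the constant, matching $2\sum_{j\ge 0}\gamma^j+\gamma^k\le 1+\tfrac{2}{1-\gamma}$). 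The reassembly step is also sound: $y_u(t)-y_v(t)=\sum_{n=0}^N w_n\Delta z(t-n)$ has at most $N+1$ terms, and Cauchy--Schwarz plus your uniform bound gives the pointwise-in-$t$ estimate whose sum over $t=1,\dots,M$ is the stated inequality. What your version buys is a uniform bound on $\abs{y_u(t)-y_v(t)}^2$ valid at every single time step (the paper only aggregates over each block $I_j$), which is a modest step toward the Question posed after the theorem, though it does not answer it since your right-hand side still involves $\norm{u-v}_2$ rather than $\abs{u(t)-v(t)}$; what it loses relative to the paper is only the explicit closed form of the node states, which the paper reuses later (e.g.\ in Fact \ref{periodic_fact}). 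The tension you flag between $x_0(1)=f(0)$ and Corollary \ref{3} is an inconsistency in the paper's Definition \ref{1} itself; the paper's own proof silently adopts $x_0(t)=f(\beta u(t))$ on $[1,N+1]$, exactly as you do, so nothing is lost.
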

\begin{remark}
The bars $\|\cdot\|$ indicate the length of the vector as a time series, whereas the bars $|\cdot|$ are used for all other lengths.
\end{remark}
\begin{proof}
For all $t,$
\begin{equation}
\begin{split}\label{4}
|y_u(t)-y_v(t)| &= \left|\sum_{i=0}^N w_i (x_i^{(u)}(t) - x_i^{(v)}(t))\right| \\
&\leq \sum_{i=0}^N |w_i| \cdot |x_i^{(u)}(t) - x_i^{(v)}(t)|.
\end{split}
\end{equation}

If $t=0$ there is nothing to prove.  We will deal with special time intervals and achieve the above estimation on each interval.

\begin{enumerate}[(I)]
\item ${1\leq t \leq N+1}$:\\
\begin{center}
$x_0(t) = f(\alpha x_N(t-1) + \beta u(t)) = f(\beta u(t))$ \\
\begin{displaymath}
  x_1(t)=\left\{
  \begin{array}{lr}
f(\beta u(t-1) & ,~ t\geq 2 \\
0 & ,~ t=1
\end{array}
\right.
\end{displaymath}
\\
\begin{displaymath}
x_2(t)=\left\{
\begin{array}{lr}
f(\beta u(t-2) & ,~ t\geq 3 \\
0 & ,~ t=1,2
\end{array}
\right.
\end{displaymath}
\vdots \\
\begin{displaymath}
x_i(t) = \left\{
\begin{array}{lr}
f(\beta u(t-i)) & ,~ t\geq i\\
0 & ,~ t<i
\end{array}
\right.
\end{displaymath}
\end{center}
In this case,
\begin{equation}
\begin{split}\label{5}
\sum_{i=0}^N |w_i|\cdot|x_i^{(u)}(t)-x_i^{(v)}(t)|
&\leq \sum_{i=0}^N |w_i|\cdot |f(\beta u(t-i))-f(\beta v(t-i))| \\
&\leq \sum_{i=0}^N |w_i|\cdot L\beta|u(t-i)-v(t-i)|\\
&\leq L\beta \sum_{i=0}^N |w_i| \cdot |u(t-i)-v(t-i)|.
\end{split}
\end{equation}
It follows from \eqref{4}, \eqref{5}, and the Schwarz inequality that
\begin{equation}
\begin{split}\label{6}
|y_u(t)-y_v(t)|^2 &\leq L^2 \beta^2 \left(\sum_{i=0}^N |w_i| \cdot |u(t-i)-v(t-i)| \right)^2 \\
&\leq L^2 \beta^2 \sum_{i=0}^N |u(t-i)-v(t-i)|^2 \sum_{i=0}^N |w_i|^2 \\
&= |w|^2 L^2 \beta^2 \sum_{i=0}^N |u(t-i)-v(t-i)|^2.
\end{split}
\end{equation}

Write $C=|w|^2 L^2 \beta^2 $.  Note that the function $t\mapsto t-i$ is linear and hence injective.  Therefore equation \eqref{6} gives
\begin{equation*}
\begin{split}
\sum_t |y_u(t)-y_v(t)|^2 &\leq \sum_t C\sum_{i=0}^N |u(t-i)-v(t-i)|^2 \\
&= C\sum_{i=0}^N \sum_t |u(t-i)-v(t-i)|^2 \\
&\leq C\sum_{i=0}^N \|u-v\|_{\ell^2(\mathbb{R}^{N+2})}^2 \\
&= C(N+1) \|u-v\|_{\ell^2(\mathbb{R}^{N+2})}^2,
\end{split}
\end{equation*}
so that
\begin{equation}\label{7}
\|y_u-y_v\|_{\ell^2(\mathbb{R}^{N+2})}^2 \leq |w|^2 (N+1) (L \beta)^2 \|u-v\|_{\ell^2(\mathbb{R}^{N+2})}^2.
\end{equation}

\item $N+1\leq t\leq 2N+2$

Note that for $0\leq k \leq N$ we have
\begin{equation}
\begin{split}
x_0(N+1+k)&=f(\alpha x_N(N+k) + \beta u(N+k+1))\\
&= f(\alpha x_0(k) + \beta u(N+k+1))\\
&= f(\alpha f(\beta u(k)) + \beta u(N+1+k)).
\end{split}
\end{equation}
So the node states can be written as follows:
\begin{center}
$x_0(t)=f(\alpha f(\beta u(t-N-1)) + \beta u(t))$ \\

\begin{displaymath}
x_1(t)= \left\{
\begin{array}{lr}
f(\alpha f(\beta u(t-N-2)) + \beta u(t-1)) & ,~ t\geq N+3\\
f(\beta u(t-1)) & ,~ t=N+1,N+2
\end{array}
\right.
\end{displaymath}
\\

\begin{displaymath}
x_2(t)= \left\{
\begin{array}{lr}
f(\alpha f(\beta u(t-N-3)) + \beta u(t-2)) & ,~ t\geq N+4\\
f(\beta u(t-2)) & ,~ t=N+1,N+2,N+3
\end{array}
\right.
\end{displaymath}
\vdots
\begin{displaymath}
x_i(t)= \left\{
\begin{array}{lr}
f(\alpha f(\beta u(t-N-(i+1)) + \beta u(t-i)) & ,~ t\geq N+i+2\\
f(\beta u(t-i)) & ,~ t=N+1,N+2,\dots,N+i+1
\end{array}
\right.
\end{displaymath}
\end{center}

Fix $t$ and write $t=N+2+i_0$ for some $i_0 \in \{0,1,\dots,N\}.$  Then

\begin{displaymath}
x_i(t) = \left\{
\begin{array}{lr}
f(\alpha f(\beta u(t-N-i-1)) + \beta u(t-i)) & ,~ 0\leq i\leq i_0 \\
f(\beta (t-i)) & ,~ i_0 < i \leq N \\
\end{array}
\right.
\end{displaymath}
So $|y_u(t) - y_v(t)|$ is bounded above by
\begin{equation}
\begin{split}\label{10}
& \sum_{0\leq i\leq i_0} |w_i| \cdot |f(\alpha f(\beta u(t-N-i-1)) + \beta u(t-i)) - [f(\alpha f(\beta v(t-N-i-1)) + \beta v(t-i))]| \\
&\hspace{1cm} + \sum_{i_0 < i \leq N} |w_i| \cdot |f(\beta u(t-i)) - f(\beta v(t-i))|
\end{split}
\end{equation}
Also
\begin{equation}
\begin{split}\label{8}
& |f(\alpha f(\beta u(t-N-i-1)) + \beta u(t-i)) - f(\alpha f(\beta v(t-N-i-1)) + \beta v(t-i))| \\
&\leq L|\alpha f(\beta u(t-N-i-1)) + \beta u(t-i) - [\alpha f(\beta v(t-N-i-1)) + \beta v(t-i)]| \\
&= L|\alpha[f(\beta u(t-N-i-1))-f(\beta v(t-N-i-1))] + \beta[u(t-i)-v(t-i)]| \\
&\leq L\left[ \alpha|f(\beta u(t-N-i-1))-f(\beta v(t-N-i-1))| + \beta|u(t-i)-v(t-i)| \right] \\
&\leq L[ \alpha L \beta |u(t-N-i-1)-v(t-N-i-1)| + \beta|u(t-i)-v(t-i)| ] \\
&\leq \alpha L^2 \beta |u(t-N-i-1)-v(t-N-i-1)| + L\beta|u(t-i)-v(t-i)|.
\end{split}
\end{equation}
Thus the square of the quantity \eqref{10} is bounded above by
\begin{equation}
\begin{split}\label{9a}
& [~~\alpha L^2 \beta \sum_{0\leq i\leq i_0} |w_i| \cdot |u(t-N-i-1)-v(t-N-i-1)| + L\beta \sum_{i_0 < i \leq N} |w_i| \cdot |u(t-i)-v(t-i)| ~~]^2 \\
&\leq 2(L\beta)^2 [~~ (\alpha L)^2 \left(\sum_{0\leq i\leq i_0} |w_i| \cdot |u(t-N-i-1)-v(t-N-i-1)| \right)^2 \\
&\hspace{1cm} + \left(\sum_{i_0 < i \leq N} |w_i| \cdot |u(t-i)-v(t-i)|\right)^2 ] \\
&\leq 2(L\beta)^2 [~~ (\alpha L)^2 \sum_{0 \leq i \leq i_0} |w_i|^2 |u(t-N-i-1)-v(t-N-i-1)|^2 \\
&\hspace{1cm} + \sum_{i_0 < i \leq N} |w_i|^2 \cdot |u(t-i)-v(t-i)|^2 ~~ ]
\end{split}
\end{equation}
It now follows that
\begin{equation}
\begin{split}
\|y_u - y_v\|_{\ell^2(\mathbb{R}^{N+2})}^2 &\leq (L\beta)^2 (N+1) (2(\alpha L)^2 +2)|w|^2 \cdot \|u-v\|_{\ell^2(\mathbb{R}^M)}^2
\end{split}
\end{equation}
Note that we have used the Schwarz inequality and the convexity of the function $x\mapsto x^2$ to obtain \eqref{9a}.

\item $2N+2 \leq t \leq 3N+3$ \\

\begin{center}
$x_0(t) = f(\alpha f(\alpha f(\beta u(t-2N-2)) + \beta u(t-N-1)))$ \\

\begin{displaymath}
x_1(t) = \left\{
\begin{array}{lr}
f(\alpha f(\alpha f(\beta u(t-2N-3)) + \beta u(t-N-2))) & ,~ t\geq 2N+3 \\
f(\alpha f(\beta u(t-N-2)) + \beta u(t-1)) & ,~ t = 2N+2
\end{array}
\right.
\end{displaymath}
\\

\begin{displaymath}
x_2(t) = \left\{
\begin{array}{lr}
f(\alpha f(\alpha f(\beta u(t-2N-4)) + \beta u(t-N-3))) & ,~ t \geq 2N+4 \\
f(\alpha f(\beta u(t-N-3)) + \beta u(t-2)) & ,~ t = 2N+2, 2N+3
\end{array}
\right.
\end{displaymath}
\vdots \\
\begin{displaymath}
x_i(t) = \left\{
\begin{array}{lr}
f(\alpha f(\alpha f(\beta u(t-2N-i-2)) + \beta u(t-N-i-1))) & ,~ t \geq 2N+i+2 \\
f(\alpha f(\beta u(t-N-i-1)) + \beta u(t-i)) & ,~ 2N+2 \leq t \leq 2N+i+1
\end{array}
\right.
\end{displaymath}
\end{center}
Let $t = 2N+3+ i_0,~ i_0 \in \{0,\dots,N\}.$  By the same argument used to obtain \eqref{10}, we see that the quantity $|y_u(t) - y_v(t)|$ is bounded above by
\begin{equation}
\begin{split}\label{11}
& \sum_{0\leq i\leq i_0} [~~ |w_i| \cdot |f(\alpha f(\alpha f(\beta u(t-2N-i-2)) + \beta u(t-N-i-1))) \\
&\hspace{1cm} - f(\alpha f(\alpha f(\beta v(t-2N-i-2)) + \beta v(t-N-i-1)))| ~~] \\
&+ \sum_{i_0 < i \leq N} |w_i| \cdot |f(\alpha f(\beta u(t-N-i-1)) + \beta u(t-i)) - f(\alpha f(\beta v(t-N-i-1)) + \beta v(t-i))|.
\end{split}
\end{equation}
By the same argument as in \eqref{10}, the first summand in \eqref{11} has the property
\begin{equation}
\begin{split}
& |f(\alpha f(\alpha f(\beta u(t-2N-i-2)) + \beta u(t-N-i-1))) \\
&\hspace{1cm} - f(\alpha f(\alpha f(\beta v(t-2N-i-2)) + \beta v(t-N-i-1)))| \\
&\leq \alpha L\left[ \alpha L^2 \beta |u(t-2N-i-2)-v(t-2N-i-2)| + L\beta|u(t-N-i-1)-v(t-N-i-1)| \right].
\end{split}
\end{equation}
The second summand of \eqref{11} is simply the first summand of \eqref{10}, so similarly
\begin{equation}
\begin{split}
& |f(\alpha f(\beta u(t-N-i-1)) + \beta u(t-i)) - f(\alpha f(\beta v(t-N-i-1)) + \beta v(t-i))| \\
&\leq \alpha L^2 \beta |u(t-N-i-1)-v(t-N-i-1)| + L\beta|u(t-i)-v(t-i)|,
\end{split}
\end{equation}
so that the quantity \eqref{11} is bounded above by
\begin{equation}
\begin{split}\label{12}
& \alpha^2 L^3 \beta\sum_{0\leq i\leq i_0} |w_i| \cdot |u(t-2N-i-2)-v(t-2N-i-2)| \\
&+ \alpha L^2 \beta \sum_{i=0}^N |w_i| \cdot |u(t-N-i-1)-v(t-N-i-1)| + L\beta\sum_{i_0 < i \leq N} |w_i| \cdot |u(t-i)-v(t-i)|.
\end{split}
\end{equation}
Let $C_1 = L\beta.$  For each pair $i,j$ write $z_j^i(t) = t-i-j(N+1)$. Then the square of \eqref{12} becomes
\begin{equation}
\begin{split}\label{13}
& C_1^2 [~~ \alpha^2 L^2 \sum_{0\leq i\leq i_0} |w_i| \cdot |u(z_2^i(t))-v(z_2^i(t))| + \alpha L \sum_i |w_i| \cdot  |u(z_1^i(t))-v(z_1^i(t))| \\
&\hspace{1.5cm} + \sum_{i_0< i\leq N} |w_i| \cdot |u(z_0^i(t))-v(z_0^i(t))| ~~]^2 \\
&\leq 2 C_1^2 [~~ 2\left[\alpha^2 L^2 \sum_{0\leq i\leq i_0} |w_i| \cdot |u(z_2^i(t))-v(z_2^i(t))| \right]^2 + 2 \left[\alpha L \sum_i |w_i| \cdot |u(z_1^i(t))-v(z_1^i(t))| \right]^2 \\
&\hspace{1.5cm} + \left[ \sum_{i_0< i\leq N} |w_i| \cdot |u(z_0^i(t))-v(z_0^i(t))| \right]^2 ~~] \\
&= 2 C_1^2 [~~ 2 (\alpha L)^4 \left[ \sum_{0\leq i\leq i_0} |w_i| \cdot |u(z_2^i(t))-v(z_2^i(t))| \right]^2 + 2(\alpha L)^2 \left[ \sum_i |w_i| \cdot |u(z_1^i(t))-v(z_1^i(t))| \right]^2 \\
&\hspace{1.5cm} + \left[ \sum_{i_0< i\leq N} |w_i| \cdot |u(z_0^i(t))-v(z_0^i(t))| \right]^2 ~~] \\
&\leq 2 C_1^2 (N+1)^2 [~~ 2(\alpha L)^4 \sum_i |w_i|^2 \cdot |u(z_2^i(t))-v(z_2^i(t))|^2 + 2(\alpha L)^2 \sum_i |w_i|^2 \cdot |u(z_1^i(t))-v(z_1^i(t))|^2 \\
&\hspace{2.5cm} + \sum_i |w_i|^2 \cdot |u(z_0^i(t))-v(z_0^i(t))|^2 ~~]
\end{split}
\end{equation}
By \eqref{13}
\begin{equation*}
\|y_u - y_v\|_{\ell^2(\mathbb{R}^{N+2})}^2 \leq (L\beta )^2 (N+1) \left[ 2^2(\alpha L)^4 + 2^2(\alpha L)^2 + 2 \right] |w|^2 \|u-v\|_{\ell^2(\mathbb{R}^M)}^2.
\end{equation*}

\item
In general, on the interval $I_j = [j(N+1),(j+1)(N+1)]$ for $j\in \mathbb{Z}_{\geq 0},$ we can iterate this construction to see that for $t\in I_j,$

\begin{displaymath}
x_i(t) = \left\{
\begin{array}{lr}
f \left( (\alpha f)^{j-1} \left[ \alpha f(\beta u(z_j^i(t))+\beta u(z_{j-1}^i) \right] \right) & ,~ t\geq jN+i+2 \\
f \left( (\alpha f)^{j-2} \left[ \alpha f(\beta u(z_{j-1}^i(t))+\beta u(z_{j-2}^i) \right] \right) & ,~ t < jN+i+2
\end{array}
\right.
\end{displaymath}
where $(\alpha f)^j$ is the $j$th iterate of the function $(\alpha f)(x)=\alpha f(x).$ Note that when the convexity property is used iteratively, it follows the pattern
\begin{equation*}
\left(\sum_{n=1}^N a_n + a_0 \right)^2 \leq 2^N a_1^2 + 2^N a_2^2 + 2^{N-1} a_3^2 + 2^{N-2} a_4^2 + \cdots + 2^3 a_2^2 + 2^2 a_1^2 + 2a_0^2,
\end{equation*}
so the highest power of $2$ is used twice, while the other powers of $2$ descend to $1.$

Iteration of the above argument using the Schwarz inequality and convexity of $x\mapsto x^2$ shows that for $t\in I_j = [j(N+1),(j+1)(N+1)],$
\begin{equation*}
\begin{split}
\|y_u - y_v\|_{\ell^2(\mathbb{R}^{N+2})}^2
&\leq (L\beta)^2(N+1) |w|^2 [ 2 + 2^2(\alpha L)^2 + 2^3(\alpha L)^4 + \cdots \\
&+ 2^{j-1}(\alpha L)^{2(j-2)} + 2^j(\alpha L)^{2(j-1)} + 2^j(\alpha L)^{2j} ] \|u-v\|^2 \\
&\leq (L\beta)^2(N+1) |w|^2 \left[ 1 + \sum_{k=1}^{\infty} 2^k (\alpha L)^{2(k-1)} \right] \|u-v\|^2 \\
&= (L\beta)^2(N+1) |w|^2 \left(1+\frac{2}{1-2\alpha^2 L^2} \right)\|u-v\|_{\ell^2(\mathbb{R}^M)}^2.
\end{split}
\end{equation*}

Let $j'= \min \{j : M\leq (j+1)(N+1)\}.$  Finally, the desired result is attainable since
\begin{equation*}
\begin{split}
\|y_u - y_v\|_{\ell^2(\mathbb{R}^M)}^2 &= \sum_{t=0}^M |y_u(t) - y_v(t)|^2 \\
&\leq \sum_{j=0}^{j'} \sum_{t\in I_j} |y_u(t) - y_v(t)|^2 \\
&= \sum_{j=0}^{j'} \|y_u - y_v\|_{\ell^2(\mathbb{R}^{N+2})}^2~~~(\text{where}~y_u,y_v:I_j \rightarrow \mathbb{R}) \\
&\leq \sum_{j=0}^{j'} (L\beta)^2 (N+1) |w|^2 \left(1+\frac{2}{1-2\alpha^2 L^2} \right)\|u-v\|_{\ell^2(\mathbb{R}^M)}^2 \\
&\leq M (L\beta)^2 (N+1) |w|^2 \left(1+\frac{2}{1-2\alpha^2 L^2} \right)\|u-v\|_{\ell^2(\mathbb{R}^M)}^2. \qedhere
\end{split} 
\end{equation*} 
\end{enumerate}
\end{proof}

While aesthetically pleasing, the upper bound given by Theorem \ref{main_result} is quite crude.  Moreover, this upper bound considers the entire time interval $[0,M],$ but gives no comparison of the two instantaneous reservoir states at time $t\in [0,M]$ with respect to the inputs $u,v.$  It would be ideal to have an upper bound that applies at any time $t.$
\begin{question}
Is there a constant $C=C(N,\alpha,\beta)$ such that $|y_u(t)-y_v(t)| \leq C |u(t)-v(t)|$ for all $t$?
\end{question}

\section{Separation and classification}\label{classification_section}
There are several methods used to determine reservoir quality of ESNs and Liquid State Machines.  In \cite{gibbons} Gibbons provided an overview of the Separation method (see \cite{goodman},\cite{norton}) in great generality.  Our goal is to reduce that overview to the case of the single non-linear node with delay line.  A fundamental difference between Separation for ESNs and Separation for the delay line structure is that with an ESN, there are multiple input nodes.  So if $k$ is the number of input nodes of an ESN, then at a given time $t$ an entire sequence of $k$ data points can be fed into the reservoir simultaneously to yield an instantaneous reservoir state $x(t)$.  For the delay line structure, however, a single input node does not provide such latitude.  If one wishes to feed in a sequence of data (i.e. an input) of length $k$, then $k$ time steps are required instead of just 1.

Let $\mathcal{U}$ be a set of inputs as in Subsection \ref{notation_subsection}.  We assume that at time $t,$ the set of all reservoir states $\mathcal{X}(t)=\{x^{(u)}(t) : u\in\mathcal{U}\}$ is a disjoint union of $N(\mathcal{U})$ prescribed classes $X_1(t), X_2(t),\dots,X_{N(\mathcal{U})}(t)$.  It is desirable for reservoir states within the same class to be close to each other, and for states of different classes to be far from each other.  To that end, it is useful employ several averaging processes that provide insight into the Separation within and between classes.

Several technical definitions are required to introduce the concept of Separation, all of which have been adapted from those in \cite{gibbons}.

\begin{definition} For each $n,$ the \textit{class average (center of mass)} of $X_n(t)$ is the average of all the members of $X_n(t)$:
\begin{equation}\label{class_average}
\mu(X_n(t))= \frac{1}{|X_n(t)|} \sum_{x^{(u)}(t)\in X_n(t)} x^{(u)}(t).
\end{equation}
The \textit{inter-class distance} of $\mathcal{X}(t)$ is the average of all distances between centers of mass in $\mathcal{X}(t)$:
\begin{equation}\label{interclass_distance}
C_d(t)= \frac{1}{N(\mathcal{U})^2}\sum_{n=1}^{N(\mathcal{U})}\sum_{m=1}^{N(\mathcal{U})} |\mu(X_n(t)) - \mu(X_m(t))|.
\end{equation}
\end{definition}

We need a measure of spread within classes in order to define Separation; it is obtained in the following way.  For each class $X_n(t)$, find the average distance from the elements of $X_n(t)$ to the class average $\mu(X_n(t)).$  Then, compute the average of this quantity over all classes.

\begin{definition}\label{intraclass_variance} The \textit{intra-class variance} of $\mathcal{X}(t)$ is
\begin{equation*}
C_v(t)= \frac{1}{N(\mathcal{U})} \sum_{n=1}^{N(\mathcal{U})} \frac{1}{|X_n(t)|} \sum_{x^{(u)}(t)\in X_n(t)} |\mu(X_n(t))-x^{(u)}(t)|.
\end{equation*}
\end{definition}

To perform good classification, the class averages should be separated by an amount substantial enough to differentiate them.  Moreover, the members of a given class should not stray too far from their class average, or else they could be misclassified.  Therefore the following definition provides a tool for measuring the effectiveness of a single-node reservoir classification system.
\begin{definition}\label{separation}
The \textit{separation of $\mathcal{X}(t)$} is
\begin{equation}
\text{Sep}_{\mathcal{X}}(t)=\frac{C_d(t)}{C_v(t)+1}.
\end{equation}
\end{definition}

Definition \ref{separation} first appeared in \cite{goodman}, in which Goodman obtained results yielding large positive correlations between separation and classification accuracy of the output function.  This was done for temporal pattern classification problems in the context of randomly generated reservoirs~\cites{gibbons,goodman}.  So at time step $t$, if $C_d(t)$ is large while  $C_v(t)$ is relatively small, one expects high classification accuracy.  It is shown in \cite{gibbons} that a hallmark of good classification for random reservoirs is the existence of a constant $C>0$ such that
\begin{equation}\label{inverse_lip}
|x^{(u)}(t)-x^{(v)}(t)| \geq C|u(t)-v(t)|
\end{equation}
for all pairs $u,v\in\mathcal{U}$ such that $|u(t)-v(t)|$ is ``large."  For sake of simplicity, let us say that \eqref{inverse_lip} should hold whenever $|u(t)-v(t)|\geq 1.$

\subsection{Injectivity and periodicity of $f$}
If the chosen nonlinear function $f$ is injective (which is fairly typical considering the common usage of $f=\tanh$), then an inequality like \eqref{inverse_lip} is not outside the realm of possibility, since the function $u(t)\mapsto x^{(u)}(t)$ is also injective.

\begin{fact}
Let $X(N,f,\beta,\alpha)$ be a reservoir where $f$ is injective. If two inputs $u$ and $v$ are such that $u(t)\neq v(t),$ then $x^{(u)}(t)\neq x^{(v)}(t).$
\end{fact}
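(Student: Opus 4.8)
The plan is to prove the contrapositive: assuming $x^{(u)}(t) = x^{(v)}(t)$ as states of the reservoir, I will show $u(t) = v(t)$. The whole idea is that $u(t)$ enters the reservoir at time $t$ in exactly one place — additively, inside the argument of $f$ at the first node — so injectivity of $f$ should let one ``read off'' $u(t)$ from the state. Accordingly, the first thing I would record is the obvious but crucial bookkeeping: both the first node $x_1(t)$ and the last node $x_N(t)$ are coordinates of $x(t) = \{x_1(t),\dots,x_N(t)\}$, so any equality of states transmits to each of them separately.

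Working with the modified form of Definition \ref{resdef}, in which $x_1(t) = f(\alpha x_N(t) + \beta u(t))$, the node that the input feeds and the node that supplies the feedback are thus both visible in $x(t)$. From $x^{(u)}(t) = x^{(v)}(t)$ I get simultaneously $x_1^{(u)}(t) = x_1^{(v)}(t)$ and $x_N^{(u)}(t) = x_N^{(v)}(t)$. Expanding the first equality via the defining relation gives $f(\alpha x_N^{(u)}(t) + \beta u(t)) = f(\alpha x_N^{(v)}(t) + \beta v(t))$; injectivity of $f$ upgrades this to $\alpha x_N^{(u)}(t) + \beta u(t) = \alpha x_N^{(v)}(t) + \beta v(t)$; and cancelling the (equal) feedback terms and dividing by $\beta \neq 0$ — recall $\beta \in (0,1)$ — leaves $u(t) = v(t)$. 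That is the contrapositive, so the Fact follows.

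The step I expect to be the real obstacle is precisely this alignment of the feedback term with a coordinate of the \emph{current} state. Under the original Definition \ref{resdef} the first node reads the \emph{delayed} value $x_N(t-1)$, which is a coordinate of $x(t-1)$ rather than of $x(t)$ and is determined by the earlier inputs $u(1),\dots,u(t-1)$; nothing in the equality $x^{(u)}(t) = x^{(v)}(t)$ forces $x_N^{(u)}(t-1) = x_N^{(v)}(t-1)$, and without that one cannot perform the cancellation. So in the unmodified setting the clean argument goes through only under the extra hypothesis that $u$ and $v$ agree at all times strictly before $t$ — in which case $x_N^{(u)}(t-1) = x_N^{(v)}(t-1)$ follows by induction on the shared history and the same one-line injectivity argument applied to $x_1$ finishes — whereas for full generality one should phrase the statement relative to the modified recursion, as above.
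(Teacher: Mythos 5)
Your mechanism is exactly the engine of the paper's proof: equate the node-one update rules for $u$ and $v$, strip off $f$ by injectivity, cancel the feedback terms, and divide by $\beta>0$. Where you differ is in how the feedback term gets cancelled. You either (a) pass to the modified recursion $x_1(t)=f(\alpha x_N(t)+\beta u(t))$, so that the feedback node is a coordinate of the \emph{current} state, or (b) keep the delayed recursion but add the hypothesis that $u$ and $v$ agree at all times strictly before $t$. The paper stays with the delayed recursion and removes your extra hypothesis by a device you did not deploy: argue by contradiction and replace $t$ by $t_1=\min\{s:u(s)\neq v(s)\}$. At $t_1$ your shared-history condition holds automatically by minimality, so $x_N^{(u)}(t_1-1)=x_N^{(v)}(t_1-1)$, and your one-line injectivity argument gives $u(t_1)=v(t_1)$, contradicting the choice of $t_1$. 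That minimality trick is the only ingredient separating your version (b) from the paper's proof, and without it your argument, as you acknowledge, only establishes the Fact under a stronger hypothesis than the one stated (or for a different recursion than the one the paper's proof actually uses).

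That said, your diagnosis of the delayed-feedback problem is correct and points at a step the paper itself leaves implicit. To run the cancellation at $t_1$ one needs $x_1^{(u)}(t_1)=x_1^{(v)}(t_1)$, but the hypothesis being contradicted is only $x^{(u)}(t_0)=x^{(v)}(t_0)$ at the original time $t_0\geq t_1$. The paper asserts the former ``since'' the latter; this is immediate if $t_0=t_1$, while for $t_0>t_1$ it requires reading $x_1(t_1)$ off the delay line as $x_{1+(t_0-t_1)}(t_0)$, which is only possible while $t_0-t_1\leq N-1$. You correctly observed that $x_N(t-1)$ is not a coordinate of $x(t)$; the analogous observation for $x_1(t_1)$ versus $x(t_0)$ shows the paper's reduction from a general time of disagreement to the first one is incomplete. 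So: adopt the contradiction-at-$t_1$ device and your proof matches the paper's; the residual worry you raise about arbitrary $t$ is genuine, but it is a gap in the paper's own argument rather than an obstacle peculiar to yours.
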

\begin{proof}
Suppose $u(t_0)\neq v(t_0)$ and assume $x^{(u)}(t_0)=x^{(v)}(t_0).$  Put $t_1=\min\{t:u(t)\neq v(t)\}$.  By definition \ref{resdef},
\begin{equation}
x_1^{(u)}(t_1) = f(\alpha x_N^{(u)}(t_1 -1)+\beta u(t_1)),\\
\end{equation}
and since $u(t_1-1)=v(t_1-1),$ we have
\begin{equation}
\begin{split}\label{22}
x_1^{(v)}(t_1) &= f(\alpha x_N^{(v)}(t_1 -1)+\beta v(t_1)) \\
&= f(\alpha x_N^{(u)}(t_1 -1)+\beta v(t_1)).
\end{split}
\end{equation}
Then $x_1^{(u)}(t_1)=x_1^{(v)}(t_1)$ since $x^{(u)}(t_0)=x^{(v)}(t_0),$ so it follows from \eqref{22} that
\begin{equation}\label{23}
\alpha x_N^{(u)}(t_1 -1)+\beta u(t_1) = \alpha x_N^{(u)}(t_1 -1)+\beta v(t_1).
\end{equation}
By \eqref{23} it is clear that $u(t_1)=v(t_1)$ since $\beta >0,$ which is a contradiction.
\end{proof}

Suppose that we graph $u$ as a function of time $t,$ and that we desire a single-node reservoir $X(N,f,\beta,\alpha)$ whose state is not extremely sensitive to vertical translations.  Intuitively, it seems reasonable to use a nonlinear function $f$ that is periodic.  The following fact shows that periodic functions $f$ can, in some sense, yield periodic reservoir states.

\begin{fact}\label{periodic_fact}
Let $X(N,f,\beta,\alpha)$ be a reservoir with inputs $u$ and $v,$ where $f$ is $P$-periodic.  For all $t$, if $v(t)=u(t)-\nicefrac{P}{\beta}$, then $x^{(u)}(t)=x^{(v)}(t).$
\end{fact}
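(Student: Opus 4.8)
The plan is to prove by induction on $t$ that $x^{(u)}(t)=x^{(v)}(t)$, working componentwise through the node states $x_1,\dots,x_N$. (I read the hypothesis as saying $v(t)=u(t)-\nicefrac{P}{\beta}$ for every $t\geq 1$; the $0$th coordinate is irrelevant, as noted below.)

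For the base case $t=0$, the relation $x_k(0)=0$ in \eqref{resdefeqn} is imposed independently of the input, so $x_k^{(u)}(0)=0=x_k^{(v)}(0)$ for every $k$. For the inductive step, assume the node states agree at time $t-1$, where $t\geq 1$. When $k\geq 2$, the delay relation $x_{k}(t)=x_{k-1}(t-1)$ together with the inductive hypothesis gives $x_k^{(u)}(t)=x_{k-1}^{(u)}(t-1)=x_{k-1}^{(v)}(t-1)=x_k^{(v)}(t)$. For $k=1$, I would write
\[
x_1^{(u)}(t)=f\left(\alpha x_N^{(u)}(t-1)+\beta u(t)\right)=f\left(\alpha x_N^{(v)}(t-1)+\beta v(t)+P\right),
\]
using the inductive hypothesis at index $N$ and the identity $\beta u(t)=\beta v(t)+P$; then $P$-periodicity of $f$ absorbs the $+P$, leaving $f\left(\alpha x_N^{(v)}(t-1)+\beta v(t)\right)=x_1^{(v)}(t)$. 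This completes the induction, and hence $x^{(u)}(t)=x^{(v)}(t)$ for all $t$.

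There is no real obstacle here: once one sets up the induction on the full node configuration rather than on the output alone, the step is one line. The only point worth a remark is that the update rule in \eqref{resdefeqn} refers to $u(t)$ only for $t\geq 1$ and pins every node to $0$ at $t=0$ regardless of the input, so the shift hypothesis is needed only for $t\geq 1$ and there is no clash with the normalization $u(0)=0$.
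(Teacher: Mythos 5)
Your proof is correct and follows essentially the same route as the paper's: induction on $t$ over the full node configuration, with the delay relation handling $x_k$ for $k\geq 2$ and the identity $\beta u(t)=\beta v(t)+P$ together with $P$-periodicity of $f$ handling the first node. Your remark about the base case and the irrelevance of the $0$th coordinate is a minor clarification the paper does not spell out, but the argument is the same.
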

\begin{proof}
The following basic argument proceeds by induction on $t,$ but this statement could be verified directly from the equations for $x_i(t)$ given in Part IV of the proof of Theorem \ref{main_result}.

The case $t=0$ is trivial.  For the base case, let $t=1$ and note that
\begin{equation*}
x_1^{(u)}(t)=f(\alpha x_N^{(u)}(0) + \beta u(1)) = f(\alpha x_N^{(v)}(0) + \beta v(1)+P)=x_1^{(v)}(1).
\end{equation*}
Clearly $x_i^{(u)}(1)=0=x_i^{(v)}(1)$ for all $1< j \leq N,$ so that $x^{(u)}(1)=x^{(v)}(1).$  Now assume $x^{(u)}(k)=x^{(v)}(k).$  Then
\begin{equation}\label{24}
x_1^{(u)}(k+1)=f(\alpha x_N^{(u)}(k) + \beta u(k+1)) = f(\alpha x_N^{(v)}(k) + \beta v(k+1) + P) = x_1^{(v)}(k+1),
\end{equation}
so it remains to show that $x_i^{(u)}(k+1) = x_i^{(v)}(k+1)$ for $1<i\leq N.$  Since $x^{(u)}(k)=x^{(v)}(k),$
\begin{equation*}
\begin{split}
x_2^{(u)}(k+1) &= x_1^{(u)}(k) = x_1^{(v)}(k) = x_2^{(v)}(k+1) \\
x_3^{(u)}(k+1) &= x_2^{(u)}(k) = x_2^{(v)}(k) = x_3^{(v)}(k+1) \\
&\vdots \\
x_N^{(u)}(k+1) &= x_{N-1}^{(u)}(k) = x_{N-1}^{(v)}(k) = x_N^{(v)}(k+1). \qedhere
\end{split}
\end{equation*}
\end{proof}

\begin{remark}
In the case $f=\sin$, Fact \ref{periodic_fact} demonstrates the importance of pre-conditioning reservoir inputs $u$ so that $u(t)\in [-1,1]$ for all $t\in\mathbb{Z}_{\geq 0}$ and for all $u\in\mathcal{U}$.  Depending on the task, it may or may not be desirable to put $(u(1),\dots,u(M))$ and $v=(u(1)+ \nicefrac{P}{\beta}, u(2)+\nicefrac{P}{\beta},\dots,u(M)+\nicefrac{P}{\beta})$ into the same class, since the period $P$ can be made arbitrarily large.  So choosing $f$ to be periodic can yield the undesirable pair of separation properties $|u(t)-v(t)|=\nicefrac{P}{\beta}$ and $|x^{(u)}(t) -x^{(v)}(t)|=0,$ which is a blatant violation of condition \eqref{inverse_lip}.  Since $f=\sin$ is injective on $[-\nicefrac{\pi}{2},\nicefrac{\pi}{2}],$ such difficulties can easily be avoided by normalizing each $u\in\mathcal{U}$, which gives $|u(t)|\leq \|u\|_2\leq 1.$
\end{remark}

It is not clear whether any or all of the aforementioned quality metrics are useful for the single-node with delay line reservoir model.  Since the reservoir states rapidly change over time while receiving a signal, it is quite difficult to produce a lower bound on the difference in reservoir states in terms of the difference in inputs such as \eqref{inverse_lip}.


\subsection*{Acknowledgement}
This article is based on research performed during the summer of 2015 by the author at Air Force Research Laboratory in Rome, NY under the direction of Ashley Prater.

\begin{bibdiv}
\begin{biblist}

\bib{appeltant}{article}{
  title={Reservoir computing based on delay-dynamical systems},
  author={Appeltant, Lennert},
  journal={These de Doctorat, Vrije Universiteit Brussel/Universitat de les Illes Balears},
  year={2012}
}

\bib{appeltantetal}{article}{,
  title={Information processing using a single dynamical node as complex system},
  author={Appeltant, Lennert},
  author={Soriano, Miguel Cornelles},
  author={Van der Sande, Guy},
  author={Danckaert, Jan},
  author={Massar, Serge},
  author={Dambre, Joni},
  author={Schrauwen, Benjamin},
  author={Mirasso, Claudio R},
  author={Fischer, Ingo},
  journal={Nature communications},
  volume={2},
  pages={468},
  year={2011},
  publisher={Nature Publishing Group}
}

\bib{brunner}{article}{
  title={Parallel photonic information processing at gigabyte per second data rates using transient states},
  author={Brunner, Daniel},
  author={Soriano, Miguel C},
  author={Mirasso, Claudio R},
  author={Fischer, Ingo},
  journal={Nature communications},
  volume={4},
  pages={1364},
  year={2013},
  publisher={Nature Publishing Group}
}

\bib{candes}{article}{
author ={Candes, Emmanuel},
author={Tao, Terence},
doi = {10.1214/009053606000001523},
journal = {The Annals of Statistics},
journal = {Ann. Statist.},
number = {6},
pages = {2313--2351},
publisher = {The Institute of Mathematical Statistics},
title = {The Dantzig selector: Statistical estimation when p is much larger than n},
url = {http://dx.doi.org/10.1214/009053606000001523},
volume = {35},
year = {2007}
}

\bib{duport}{article}{
  title={All-optical reservoir computing},
  author={Duport, Fran{\c{c}}ois},
  author={Schneider, Bendix},
  author={Smerieri, Anteo},
  author={Haelterman, Marc},
  author={Massar, Serge},
  journal={Optics express},
  volume={20},
  number={20},
  pages={22783--22795},
  year={2012},
  publisher={Optical Society of America}
}

\bib{gibbons}{article}{
  title={Unifying quality metrics for reservoir networks},
  author={Gibbons, Thomas E},
  booktitle={Neural Networks (IJCNN), The 2010 International Joint Conference on},
  pages={1--7},
  year={2010},
  organization={IEEE}
}

\bib{goodman}{article}{
  title={Spatiotemporal pattern recognition via liquid state machines},
  author={Goodman, Eric},
  author={Ventura, Dan A},
  year={2006},
  publisher={IEEE}
}


\bib{jaeger}{article}{
  title={The “echo state” approach to analysing and training recurrent neural networks-with an erratum note},
  author={Jaeger, Herbert},
  journal={Bonn, Germany: German National Research Center for Information Technology GMD Technical Report},
  volume={148},
  pages={34},
  year={2001}
}

\bib{norton}{article}{
  title={Improving liquid state machines through iterative refinement of the reservoir},
  author={Norton, David},
  author={Ventura, Dan},
  journal={Neurocomputing},
  volume={73},
  number={16},
  pages={2893--2904},
  year={2010},
  publisher={Elsevier}
}

\bib{paquot}{article}{
  title={Optoelectronic reservoir computing},
  author={Paquot, Yvan},
  author={Duport, Francois},
  author={Smerieri, Antoneo},
  author={Dambre, Joni},
  author={Schrauwen, Benjamin},
  author={Haelterman, Marc},
  author={Massar, Serge},
  journal={Scientific reports},
  volume={2},
  year={2012},
  publisher={Nature Publishing Group}
}

\bib{prater}{article}{
   author={Prater, Ashley},
   author={Shen, Lixin},
   author={Suter, Bruce W.},
   title={Finding Dantzig selectors with a proximity operator based
   fixed-point algorithm},
   journal={Comput. Statist. Data Anal.},
   volume={90},
   date={2015},
   pages={36--46},
   issn={0167-9473},
   review={\MR{3354827}},
   doi={10.1016/j.csda.2015.04.005},
}


\end{biblist}
\end{bibdiv}

\end{document}